\newcommand{\defi}[1]{\textsf{#1}} % for defined terms
\newenvironment{romanenum}{\hfill \begin{enumerate} }{\end{enumerate}}
\newcommand{\C}{{\mathbb C}}
\newcommand{\G}{{\mathbb G}}
\newcommand{\Q}{{\mathbb Q}}
\newcommand{\R}{{\mathbb R}}
\newcommand{\Z}{{\mathbb Z}}
\newcommand{\calG}{{\mathcal G}}
\newcommand{\calH}{{\mathcal H}}
\newcommand{\calT}{{\mathcal T}}
\newcommand{\OO}{{\mathcal O}}
\DeclareMathOperator{\reg}{reg}
\DeclareMathOperator{\rk}{rk}
\DeclareMathOperator{\Lie}{Lie}
\DeclareMathOperator{\Hom}{Hom}
\DeclareMathOperator{\Aut}{Aut}
\DeclareMathOperator{\Gal}{Gal}
\DeclareMathOperator{\Res}{Res}
\newcommand{\GL}{\operatorname{GL}}
\newcommand{\isom}{\simeq}
\newcommand{\del}{\partial}
\newcommand{\intersect}{\cap} % binary intersection
\newcommand{\tensor}{\otimes}
\newtheorem{theorem}{Theorem}[section]
\newtheorem{lemma}[theorem]{Lemma}
\newtheorem{corollary}[theorem]{Corollary}
\newtheorem{proposition}[theorem]{Proposition}
\theoremstyle{definition}
\newtheorem{definition}[theorem]{Definition}
\newtheorem{question}[theorem]{Question}
\newtheorem{examples}[theorem]{Examples}
\theoremstyle{remark}
\newtheorem{remark}[theorem]{Remark}
\begin{document}

\title{The $p$-adic closure of a subgroup of rational points on a commutative algebraic group}
\subjclass[2000]{Primary 14L10; Secondary 11E95, 14G05, 14G20, 22E35}
\keywords{$p$-adic logarithm, rational points, algebraic group, Chabauty's method, Leopoldt's conjecture}
\author{Bjorn Poonen}
\thanks{This research was supported by NSF grant DMS-0301280.}
\address{Department of Mathematics, University of California, 
        Berkeley, CA 94720-3840, USA}
\email{poonen@math.berkeley.edu}
\urladdr{http://math.berkeley.edu/\~{}poonen}
\date{November 30, 2007}

\begin{abstract}
Let $G$ be a commutative algebraic group over $\Q$.
Let $\Gamma$ be a subgroup of $G(\Q)$ 
contained in the union of the compact subgroups of $G(\Q_p)$.
We formulate a guess for the dimension of the closure of $\Gamma$
in $G(\Q_p)$, and show that its correctness for certain tori
is equivalent to Leopoldt's conjecture.
\end{abstract}

\maketitle

%****************************************************************************
\section{Introduction}\label{S:introduction}

\subsection{Notation}

Let $\Q$ be the field of rational numbers.
Let $p$ be a prime,
and let $\Q_p$ be the corresponding completion of $\Q$.
Let $\Z_p$ be the completion of $\Z$ at $p$.
If $K$ is a number field, then $\OO_K$ is the ring of the integers,
and for any finite set $S$ of places, $\OO_{K,S}$ is the ring of $S$-integers.
If $G$ is a group (or group scheme),
then $H \le G$ means that $H$ is a subgroup (or subgroup scheme) of $G$.
If $A$ is an integral domain with fraction field $K$,
and $M$ is an $A$-module,
then $\rk_A M$ is the dimension of the $K$-vector space $M \tensor_A K$;
we write $\rk$ for $\rk_\Z$.

\subsection{The logarithm map for a $p$-adic Lie group}

Let $G$ be a finite-dimensional commutative
Lie group over $\Q_p$ (see \cite{BourbakiLie1-3}*{III.\S1} for terminology).
The Lie algebra $\Lie G$ is the tangent space of $G$ at the identity.
So $\Lie G$ is a $\Q_p$-vector space of dimension $\dim G$.
Let $G_f$ be the union of the compact subgroups of $G$.
By \cite{BourbakiLie1-3}*{III.\S7.6}, 
$G_f$ is an open subgroup of $G$,
and there is a canonical homomorphism
\[
        \log\colon G_f \to \Lie G,
\]
defined first on a sufficiently small compact open subgroup 
by formally integrating translation-invariant $1$-forms,
and then extended by linearity.
Moreover, $\log$ is a local diffeomorphism, and its kernel 
is the torsion subgroup of $G_f$.
It behaves functorially in $G$.

\begin{examples}
\begin{romanenum}
\item 
If $G=\Q_p$ (the additive group), then $G_f=\Q_p$, 
and $\log$ is an isomorphism.
In this example, $G_f$ is not compact.
\item 
If $G=\Q_p^\times$, then $G_f=\Z_p^\times$.
\item
If $G=A(\Q_p)$ for an abelian variety $A$ over $\Q_p$, then $G_f=G$.
\end{romanenum}
\end{examples}

\subsection{Dimension of an analytic subgroup}

Let $\Gamma$ be a finitely generated subgroup of $G_f$.
Then $\log \Gamma \subseteq \Lie G$ 
is a finitely generated abelian group of the same rank.
The closure $\overline{\log \Gamma} = \log \overline{\Gamma}$
with respect to the $p$-adic topology equals the 
$\Z_p$-submodule of $\Lie G$ spanned by $\log \Gamma$,
so it is a finitely generated $\Z_p$-module.
Define
\[
        \dim \overline{\Gamma} := \rk_{\Z_p} \overline{\log \Gamma}.
\]
This agrees with the dimension of $\overline{\Gamma}$ viewed
as a Lie group over $\Q_p$.

\subsection{Rational points}

Now let $G$ be a commutative group scheme of finite type over $\Q$.
Fix a prime $p$.
Define $G(\Q)_f:=G(\Q) \intersect G(\Q_p)_f$.
We specialize the previous sections to the Lie group $G(\Q_p)$
and to a finitely generated subgroup $\Gamma$ of $G(\Q)_f$.
Our goal is to predict the value of $\dim \overline{\Gamma}$.

\subsection{Applications}

The value of $\dim \overline{\Gamma}$ is important for a few reasons:
\begin{romanenum}
\item If $C$ is a curve of genus $g \ge 2$ over $\Q$
embedded in its Jacobian $J$, 
then the condition $\dim \overline{J(\Q)} < g$
is necessary for the application of Chabauty's method,
which attempts to calculate $C(k)$ 
or at least bound its size \cites{Chabauty1941,Coleman1985chabauty}.
\item Leopoldt's conjecture on $p$-adic independence of units 
in a number field predicts $\dim \overline{\Gamma}$ in a special case:
see Corollary~\ref{C:Leopoldt} in Section~\ref{S:tori}.
Leopoldt's conjecture is important 
because it governs the abelian extensions of $K$ of $p$-power degree.
\end{romanenum}

\subsection{Outline of the paper}

Section~\ref{S:dimension functions}
axiomatizes some of the properties of $\dim \overline{\Gamma}$
in order to identify possible candidates for its value.
Section~\ref{S:guess}
defines a ``maximal'' function $d(\Gamma)$ satisfying the same axioms
and Question~\ref{Q:over Q} asks whether 
it always equals $\dim \overline{\Gamma}$.
Section~\ref{S:shared} shows that $\dim \overline{\Gamma}$
and $d(\Gamma)$ share many other properties.
Section~\ref{S:tori} computes $d(\Gamma)$
for subgroups of integer points on tori,
and shows that a positive answer to
Question~\ref{Q:over Q} for certain tori would imply Leopoldt's conjectures.
We end with further open questions.

%****************************************************************************
\section{Dimension functions}
\label{S:dimension functions}

Let $\calG$ be the set of pairs $(G,\Gamma)$ 
where $G$ is a commutative group scheme of finite type over $\Q$
and $\Gamma$ is a finitely generated subgroup of $G(\Q)_f$.

\begin{definition}
A \defi{dimension function} is a function $\del \colon \calG \to \Z_{\ge 0}$ 
satisfying
\begin{enumerate}
\item If $\Gamma \le H(\Q)_f$ for some subgroup scheme $H \le G$,
then $\del(H,\Gamma)=\del(G,\Gamma)$.  
(Because of this, 
we generally write $\del(\Gamma)$ instead of $\del(G,\Gamma)$.)
\item $\del(\Gamma) \le \rk \Gamma$.
\item If $H \le G$ and $\Gamma''$ is the image of $\Gamma$ in $(G/H)(\Q)_f$,
then $\del(\Gamma) \le \dim H + \del(\Gamma'')$.
\end{enumerate}
\end{definition}

\begin{proposition}
The expression $\dim \overline{\Gamma}$ is a dimension function.
\end{proposition}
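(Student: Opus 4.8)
The plan is to verify the three defining properties in turn, using the functoriality of $\log$ and the identity $\overline{\log\Gamma}=\log\overline\Gamma=(\Z_p\text{-span of }\log\Gamma)$ recorded above. Throughout I would use that $\dim\overline\Gamma=\rk_{\Z_p}\overline{\log\Gamma}$ equals the dimension over $\Q_p$ of the $\Q_p$-span of $\log\Gamma$ inside $\Lie G$, since $\rk_{\Z_p}$ of a finitely generated $\Z_p$-module is computed by tensoring with $\Q_p$.

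For property~(1), suppose $\Gamma\le H(\Q)_f$ with $H\le G$. Because $\Q$ has characteristic~$0$, the subgroup scheme $H$ is smooth and $H\injects G$ is a closed immersion, so $\Lie H\to\Lie G$ is an injection of $\Q_p$-vector spaces and $\Lie H$ is a closed subspace. Functoriality of $\log$ for $H\injects G$ then identifies $\log_G\Gamma$ with $\log_H\Gamma$, so $\log\Gamma$ already lies in $\Lie H$; the $\Z_p$-module it spans, and in particular its rank, is the same formed in $\Lie H$ or in $\Lie G$, giving $\del(H,\Gamma)=\del(G,\Gamma)$. Property~(2) is immediate: $\log\Gamma$ is the image of $\Gamma$ under a homomorphism into the torsion-free group $\Lie G$, hence is spanned by the images of a generating set of $\Gamma$, so its $\Q_p$-span has dimension at most $\rk\Gamma$.

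The substance is in property~(3). Writing $\pi\colon G\to G/H$ for the quotient, I would first confirm that $\Gamma'':=\pi(\Gamma)$ lies in $(G/H)(\Q)_f$: it lies in $(G/H)(\Q)$ since $\pi$ is defined over $\Q$, and in $(G/H)(\Q_p)_f$ since the continuous homomorphism $\pi$ sends compact subgroups to compact subgroups. In characteristic~$0$ the sequence $0\to\Lie H\to\Lie G\xrightarrow{d\pi}\Lie(G/H)\to 0$ is exact, so $d\pi$ is surjective with kernel $\Lie H$. Functoriality of $\log$ gives $d\pi(\log_G\Gamma)=\log_{G/H}\Gamma''$, and applying the $\Q_p$-linear map $d\pi$ to $\Z_p$-spans yields $d\pi\bigl(\overline{\log_G\Gamma}\bigr)=\overline{\log_{G/H}\Gamma''}$. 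Restricting $d\pi$ to $\overline{\log_G\Gamma}$ then produces a short exact sequence of finitely generated $\Z_p$-modules
\[
0\to\overline{\log_G\Gamma}\cap\Lie H\to\overline{\log_G\Gamma}\to\overline{\log_{G/H}\Gamma''}\to 0,
\]
and additivity of rank gives $\dim\overline\Gamma=\rk_{\Z_p}\bigl(\overline{\log_G\Gamma}\cap\Lie H\bigr)+\dim\overline{\Gamma''}$. Since the first summand is at most $\dim_{\Q_p}\Lie H=\dim H$, I obtain $\dim\overline\Gamma\le\dim H+\dim\overline{\Gamma''}$, as required.

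I expect the only non-formal inputs to be the existence and finiteness of the quotient $G/H$ and the exactness of its Lie algebra sequence, both standard in characteristic~$0$ (where all such group schemes are smooth); the main care will go into checking that $\Gamma''$ genuinely lands in $(G/H)(\Q)_f$ and that the three logarithm maps fit into one commutative diagram, after which properties~(1)--(3) drop out as above.
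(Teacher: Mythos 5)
Your proof is correct and takes essentially the same approach as the paper: verify the three axioms directly, with (3) coming from the exact sequence induced by $G \to G/H$, additivity of dimension, and the bound $\dim H$ on the kernel. The only difference is bookkeeping: you work throughout with the $\Z_p$-spans $\overline{\log \Gamma}$ and rank additivity of finitely generated $\Z_p$-modules, whereas the paper phrases (1) via closedness of $H(\Q_p)$ in $G(\Q_p)$ and (3) via the exact sequence $0 \to \overline{\Gamma} \intersect H \to \overline{\Gamma} \to \overline{\Gamma''} \to 0$ of $p$-adic Lie groups---the same argument, given that $\dim \overline{\Gamma}$ is defined as $\rk_{\Z_p} \overline{\log \Gamma}$.
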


\begin{proof}
Since $H(\Q_p)$ is closed in $G(\Q_p)$,
the closure of $\Gamma$ in $H(\Q_p)$ equals the closure of $\Gamma$
in $G(\Q_p)$;
therefore~(1) holds.
The fact that $\overline{\log \Gamma}$ is 
the $\Z_p$-submodule spanned by $\log \Gamma$
gives the middle step in
\[
        \dim \overline{\Gamma} 
        = \rk_{\Z_p} \overline{\log \Gamma} 
        \le \rk (\log \Gamma) \le \rk \Gamma,
\]
so~(2) holds.
Finally, by continuity, the image of $\overline{\Gamma}$ in $(G/H)(\Q_p)$
equals $\overline{\Gamma''}$, so we have an exact sequence
\[
        0 \to \overline{\Gamma} \intersect H \to \overline{\Gamma} \to \overline{\Gamma''} \to 0.
\]
Taking dimensions of $p$-adic Lie groups and observing that the group
on the left has dimension at most $\dim H$ yields~(3).
\end{proof}

%****************************************************************************
\section{The guess}
\label{S:guess}

With notation as before,
define
\[
        d(G,\Gamma) := \inf_{H \le G} 
                \left( \dim H + \rk \Gamma - \rk(\Gamma \intersect H) \right),
\]
where the infimum is over all subgroup schemes $H \le G$.

\begin{proposition}
\label{P:d is dimension}
The function $d$ is a dimension function,
and any dimension function $\del$ satisfies $\del \le d$.
\end{proposition}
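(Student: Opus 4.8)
The plan is to verify the three axioms for $d$ directly, and then to deduce the maximality statement $\del \le d$ formally from the axioms satisfied by an arbitrary dimension function. First I would observe that each quantity $\dim H + \rk\Gamma - \rk(\Gamma \intersect H)$ in the infimum is a nonnegative integer, so $d(G,\Gamma)$ is a well-defined element of $\Z_{\ge 0}$ (the infimum of a set of nonnegative integers is attained). Axiom~(2) is then immediate by taking $H = 0$: the corresponding term is exactly $\rk\Gamma$, so $d(\Gamma) \le \rk\Gamma$. For axiom~(1), suppose $\Gamma \le H(\Q)_f$ for a subgroup scheme $H \le G$. Since the subgroup schemes of $H$ form a subset of those of $G$, the infimum for $d(G,\Gamma)$ ranges over a larger index set, giving $d(G,\Gamma) \le d(H,\Gamma)$. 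For the reverse inequality, given any subgroup scheme $K \le G$ I would replace it by $K \intersect H \le H$: because $\Gamma \subseteq H$ we have $\Gamma \intersect K = \Gamma \intersect (K \intersect H)$, while $\dim(K \intersect H) \le \dim K$, so $K \intersect H$ contributes a term no larger than that of $K$. Hence $d(H,\Gamma) \le d(G,\Gamma)$, and equality holds.

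The substantive step is axiom~(3). Let $H \le G$, let $\pi\colon G \to G/H$ be the quotient map, and let $\Gamma''$ be the image of $\Gamma$. The idea is to restrict the infimum for $d(G,\Gamma)$ to subgroup schemes containing $H$, which are exactly the preimages $K = \pi^{-1}(K'')$ of subgroup schemes $K'' \le G/H$. For such a $K$ there is an exact sequence $0 \to H \to K \to K'' \to 0$, so $\dim K = \dim H + \dim K''$. Moreover $\pi$ restricts to a surjection $\Gamma \intersect K \surjects \Gamma'' \intersect K''$ with kernel $\Gamma \intersect H$, and likewise $\Gamma \surjects \Gamma''$ has kernel $\Gamma \intersect H$; comparing ranks in these two sequences gives $\rk\Gamma - \rk(\Gamma \intersect K) = \rk\Gamma'' - \rk(\Gamma'' \intersect K'')$. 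Substituting, the term for $K$ equals $\dim H + \bigl(\dim K'' + \rk\Gamma'' - \rk(\Gamma'' \intersect K'')\bigr)$. Taking the infimum over $K''$ shows that the infimum over subgroup schemes containing $H$ equals $\dim H + d(\Gamma'')$; since the full infimum runs over a larger family, $d(\Gamma) \le \dim H + d(\Gamma'')$, which is axiom~(3).

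Finally, for maximality I would fix a dimension function $\del$ and a single subgroup scheme $H \le G$ and chase the axioms for $\del$. Axiom~(3) for $\del$ gives $\del(\Gamma) \le \dim H + \del(\Gamma'')$, axiom~(2) gives $\del(\Gamma'') \le \rk\Gamma''$, and the exact sequence $0 \to \Gamma \intersect H \to \Gamma \to \Gamma'' \to 0$ gives $\rk\Gamma'' = \rk\Gamma - \rk(\Gamma \intersect H)$. Combining these yields $\del(\Gamma) \le \dim H + \rk\Gamma - \rk(\Gamma \intersect H)$ for every $H$, and taking the infimum over $H$ gives $\del(\Gamma) \le d(\Gamma)$. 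I expect the only genuine subtlety to lie in the rank bookkeeping for axiom~(3) of $d$ — specifically in checking that $\pi$ carries $\Gamma \intersect K$ \emph{onto} $\Gamma'' \intersect K''$ and that the kernels match, so that the two exact sequences can be compared termwise; everything else is formal manipulation of exact sequences and infima.
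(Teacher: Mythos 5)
Your proof is correct and follows essentially the same route as the paper: axioms (1) and (2) via intersecting a given subgroup scheme with the smaller group and taking $H=\{0\}$, axiom (3) via preimages of subgroup schemes under $G \to G/H$, and the maximality $\del \le d$ by the same formal chain using $\rk \Gamma'' = \rk \Gamma - \rk(\Gamma \intersect H)$. The only difference is cosmetic: where the paper takes the preimage $K$ of a single optimal $K''$ and uses just the rank inequality from the surjection $\Gamma \intersect K \surjects \Gamma'' \intersect K''$, you compute the full restricted infimum together with the exact rank identity, which is slightly stronger than needed but equally valid.
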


\begin{proof}
First we check that $d$ is a dimension function:
\begin{enumerate}
\item 
Suppose $G' \le G$ and $\Gamma \le G'(\Q)_f$.
If $H \le G'$ then the subgroup $H' := H \intersect G'$
satisfies $\dim H' \le \dim H$
and $\Gamma \intersect H' = \Gamma \intersect H$,
so
\[
        \dim H' + \rk \Gamma - \rk(\Gamma \intersect H')
        \le \dim H + \rk \Gamma - \rk(\Gamma \intersect H).
\]
Therefore the infimum in the definition of $d(G,\Gamma)$ is
attained for some $H \le G'$, so $d(G,\Gamma)=d(G',\Gamma)$.
\item 
The $H=\{0\}$ term in the infimum is $0 + \rk \Gamma  - 0$,
so $d(\Gamma) \le \rk \Gamma$.
\item
Let $K''$ be the subgroup of $G/H$ realizing the infimum 
defining $d(\Gamma'')$.
Let $K$ be the inverse image of $K''$ under $G \to G/H$.
Then $\Gamma'' \intersect K''$ is a homomorphic image of $\Gamma \intersect K$,
so $\rk(\Gamma'' \intersect K'') \le \rk(\Gamma \intersect K)$ and
\begin{align*}
        d(\Gamma) 
        &\le \dim K + \rk \Gamma - \rk(\Gamma \intersect K) \\
        &= \dim H + \dim K'' + \rk \Gamma - \rk(\Gamma \intersect K) \\
        &\le \dim H + \dim K'' + \rk \Gamma - \rk(\Gamma'' \intersect K'') \\
        &= \dim H + d(\Gamma'').
\end{align*}
\end{enumerate}

Now we check that any dimension function $\del$ satisfies $\del \le d$.
If $H \le G$ and $\Gamma''$ is the image of $\Gamma$ in $(G/H)(\Q)$,
then properties (3) and~(2) for $\del$ and the isomorphism
$\Gamma'' \isom \Gamma/(\Gamma \intersect H)$
yield
\[
        \del(\Gamma) 
        \le \dim H + \del(\Gamma'')
        \le \dim H + \rk \Gamma''
        =  \dim H + \rk \Gamma - \rk(\Gamma \intersect H).
\]
This holds for all $H$, so $\del(\Gamma) \le d(\Gamma)$.
\end{proof}

\begin{corollary}
We have $\dim \overline{\Gamma} \le d(\Gamma)$.
\end{corollary}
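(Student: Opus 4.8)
The plan is to deduce this immediately from the two preceding propositions, which have already done all the real work. The first proposition established that the assignment $(G,\Gamma) \mapsto \dim \overline{\Gamma}$ is a dimension function in the sense of the definition, i.e.\ it satisfies axioms (1)--(3). Proposition~\ref{P:d is dimension} established two facts about $d$: that $d$ is itself a dimension function, and that every dimension function $\del$ satisfies $\del \le d$ pointwise on $\calG$.

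First I would simply invoke the universal (maximality) property of $d$. Since $\dim \overline{\Gamma}$ is a dimension function, I would apply the inequality $\del \le d$ with the particular choice $\del = \dim \overline{\Gamma}$, obtaining $\dim \overline{\Gamma} \le d(\Gamma)$ for every pair $(G,\Gamma) \in \calG$. This is the entire argument: no further construction, estimate, or limiting procedure is needed.

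There is no genuine obstacle here, since the corollary is a formal consequence of the comparison already proven in Proposition~\ref{P:d is dimension}. The only thing worth noting is that this is exactly the payoff of having axiomatized dimension functions: rather than comparing $\dim \overline{\Gamma}$ and $d(\Gamma)$ directly by hand (which would amount to re-running the argument that tests axiom (3) against the subgroups $H$ appearing in the infimum defining $d$), one packages the relevant properties into the axioms and lets the maximality of $d$ do the work. Accordingly, I would keep the proof to a single sentence citing the two propositions.
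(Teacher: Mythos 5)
Your proof is correct and is precisely the paper's intended argument: the corollary is stated without proof immediately after Proposition~\ref{P:d is dimension}, as the formal combination of the fact that $\dim \overline{\Gamma}$ is a dimension function with the maximality $\del \le d$. Nothing further is needed.
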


Proposition~\ref{P:d is dimension}
shows that the function $d(\Gamma)$ 
gives the largest guess for $\dim \overline{\Gamma}$ 
compatible with the elementary inequalities 
based on rank and the dimension of the group.
Therefore we ask:

\begin{question}
\label{Q:over Q}
Does $\dim \overline{\Gamma} = d(\Gamma)$ always hold?
\end{question}

In other words, are rational points $p$-adically independent 
whenever dependencies are not forced by having a subgroup of too high rank
inside an algebraic subgroup?

%****************************************************************************
\section{Further properties shared by both dimension functions}
\label{S:shared}

\begin{proposition}
\label{P:shared}
Let $\del(\Gamma)$ denote either $\dim \overline{\Gamma}$ or $d(\Gamma)$.
Then
\begin{romanenum}
\item
If $\Gamma' \le \Gamma \le G(\Q)_f$,
then $\del(\Gamma') \le \del(\Gamma)$.
\item If $G \to G''$ is a homomorphism and $\Gamma \le G(\Q)_f$, 
then the image $\Gamma''$ in $G''(\Q)$ is contained in $G''(\Q)_f$
and $\del(\Gamma'') \le \del(\Gamma)$.
\item
If $G \to G''$ has finite kernel and $\Gamma \le G(\Q)$,
then $\Gamma$ is contained in $G(\Q)_f$ 
if and only if its image $\Gamma''$ in $G''(\Q)$
belongs to $G''(\Q)_f$;
in this case, $\del(\Gamma)=\del(\Gamma'')$.
\item
Suppose that $\Gamma_1,\Gamma_2$ are 
\defi{commensurable} subgroups of $G(\Q)$;
i.e., $\Gamma_1 \intersect \Gamma_2$ 
has finite index in both $\Gamma_1$ and $\Gamma_2$.
Then $\Gamma_1 \le G(\Q)_f$ if and only if $\Gamma_2 \le G(\Q)_f$;
in this case, $\del(\Gamma_1) = \del(\Gamma_2)$.
\item
If $\Gamma_i \le G_i(\Q)_f$ for $i=1,2$,
then $\Gamma_1 \times \Gamma_2 \le (G_1 \times G_2)(\Q)_f$
and $\del(\Gamma_1 \times \Gamma_2) =\del(\Gamma_1) + \del(\Gamma_2)$.
\item
If $\Gamma_1,\Gamma_2 \le G(\Q)_f$,
then $\del(\Gamma_1 + \Gamma_2) \le \del(\Gamma_1) + \del(\Gamma_2)$.
\item
If $\rk \Gamma = 1$, then $\del(\Gamma)=1$.
\item
If $G \isom \G_a^n$, then $\del(\Gamma) = \rk \Gamma$.
\end{romanenum}
\end{proposition}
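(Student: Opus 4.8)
The plan is to verify each item for the two functions separately, using functoriality of $\log$ together with the description of $\overline{\log\Gamma}$ as the $\Z_p$-span of $\log\Gamma$ for $\dim\overline{\Gamma}$, and direct manipulation of the defining infimum for $d$. Throughout I will use the identity $\rk\Gamma - \rk(\Gamma\intersect H) = \rk(\text{image of }\Gamma\text{ in }(G/H)(\Q))$, which rewrites each term of the infimum defining $d$ as $\dim H$ plus the rank of the image of $\Gamma$ modulo $H$. The membership assertions about $G(\Q)_f$ rest on two standard facts: the image of a compact subgroup under a continuous homomorphism is compact, and a subgroup that maps onto a compact group with finite kernel is itself compact (applied with the finite kernels $G[n]$ of multiplication-by-$n$ and with the finite kernel of $G\to G''$). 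These give the membership in (ii) directly; the memberships in (iii) and (iv) then follow from the finite-kernel and finite-index reductions described below. For products one notes $(G_1\times G_2)(\Q_p)_f = G_1(\Q_p)_f\times G_2(\Q_p)_f$, since a compact subgroup projects to compact subgroups and a product of compact groups is compact.

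For the monotonicity and functoriality items I would argue as follows. In (i), for $\dim\overline{\Gamma}$ the inclusion $\overline{\Gamma'}\subseteq\overline{\Gamma}$ is immediate; for $d$, since the image of $\Gamma'$ in $G/H$ lies in that of $\Gamma$, each term of the infimum for $\Gamma'$ is bounded by the corresponding term for $\Gamma$, whence $d(\Gamma')\le d(\Gamma)$. In (ii), for $\dim\overline{\Gamma}$ the continuous surjection $\overline{\Gamma}\surjects\overline{\Gamma''}$ forces the dimension to drop; for $d$, given $H\le G$ I replace it by its image $H''\le G''$, using $\dim H''\le\dim H$ and the fact that the image of $\Gamma''$ in $G''/H''$ is a homomorphic image of that of $\Gamma$ in $G/H$. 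Part (iii) uses that a finite kernel makes the induced map on Lie algebras injective and makes $\Gamma\to\Gamma''$ a finite-kernel map, so the ranks of $\Gamma$, of $\Gamma\intersect H$, and of their images agree; combined with (ii) and the preimage in $G$ of a minimizing $H''$, this yields equality. Part (iv) reduces to the finite-index case $\Gamma'\le\Gamma$, where the equal ranks of $\Gamma'$, $\Gamma$ and of $\Gamma'\intersect H$, $\Gamma\intersect H$ give $d(\Gamma')=d(\Gamma)$, and nested dense spans give the equality for $\dim\overline{\Gamma}$.

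The crux is the product formula (v), and specifically the lower bound $d(\Gamma_1\times\Gamma_2)\ge d(\Gamma_1)+d(\Gamma_2)$. The upper bound and the statement for $\dim\overline{\Gamma}$ are routine: product subgroups $H_1\times H_2$ realize the sum in the infimum, and $\log$ and closure both respect the decomposition $\Lie(G_1\times G_2)=\Lie G_1\directsum\Lie G_2$. For the lower bound I must handle an arbitrary $H\le G_1\times G_2$, which need not be a product. Writing $p_1$ for the first projection, I set $A_1:=p_1(H)\le G_1$ and $H_2':=H\intersect(\{0\}\times G_2)\le G_2$, so that $0\to H_2'\to H\to A_1\to 0$ gives $\dim H=\dim A_1+\dim H_2'$. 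Projecting $\Gamma\intersect H$ to $G_1$ produces an image inside $\Gamma_1\intersect A_1$ with kernel $\{0\}\times(\Gamma_2\intersect H_2')$, so
\[
        \rk(\Gamma\intersect H)\le\rk(\Gamma_1\intersect A_1)+\rk(\Gamma_2\intersect H_2').
\]
Substituting and regrouping splits the $H$-term for $\Gamma_1\times\Gamma_2$ into the $A_1$-term for $\Gamma_1$ plus the $H_2'$-term for $\Gamma_2$, each of which is at least the respective infimum; this is the step I expect to be the main obstacle, since it is where the non-product subgroups are tamed.

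Finally, (vi) follows by applying (ii) to the sum map $G\times G\to G$, giving $\del(\Gamma_1+\Gamma_2)\le\del(\Gamma_1\times\Gamma_2)=\del(\Gamma_1)+\del(\Gamma_2)$ via (v). For (vii), axiom (2) gives $\del(\Gamma)\le 1$; choosing an element of infinite order and noting that $\log$ is injective modulo torsion shows $\overline{\log\Gamma}\ne 0$, so $\dim\overline{\Gamma}=1$, and the squeeze $\dim\overline{\Gamma}\le d\le\rk\Gamma$ forces $d=1$ as well. For (viii), $G\isom\G_a^n$ makes $\log$ the identity on $\Q_p^n$ with $G(\Q)_f=\Q^n$, so $\overline{\log\Gamma}$ is the $\Z_p$-span of $\Gamma\subseteq\Q^n$; the essential point is that vectors that are $\Q$-linearly independent in $\Q^n$ remain $\Q_p$-linearly independent in $\Q_p^n$, whence $\dim\overline{\Gamma}=\rk\Gamma$, and again $\dim\overline{\Gamma}\le d\le\rk\Gamma$ pins down $d$. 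This last independence, automatic in the additive case, is exactly what becomes Leopoldt's conjecture for tori.
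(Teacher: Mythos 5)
Your treatment of the crux, item (v), is exactly the paper's argument: the paper also replaces an arbitrary minimizing $K \le G_1\times G_2$ by $H_1\times H_2$ with $H_1=\pi_1(K)$ and $H_2=\ker(\pi_1|_K)$, uses $\dim K=\dim H_1+\dim H_2$, and gets the rank inequality from the exact sequence $0\to\Gamma_2\cap H_2\to\Gamma\cap K\to\Gamma_1\cap H_1$. Items (i), (ii), (vi) also match the paper. On several other items you take valid shortcuts that differ from the paper: for (iii) the paper reduces to $G''=G/H$ with $H$ finite and squeezes using the dimension-function axioms ($\del(\Gamma)\le\dim H+\del(\Gamma'')$ with $\dim H=0$), which handles both functions uniformly, whereas you argue separately via injectivity of $\Lie G\to\Lie G''$ and via the preimage of a minimizing $H''$ (your surjectivity of $\Gamma\cap H\to\Gamma''\cap H''$ for $H$ the full preimage is correct); for (iv) your direct observation that finite index leaves every term of the infimum unchanged is simpler than the paper's route through multiplication-by-$n$ and (iii); for (vii) and (viii) your squeeze $\dim\overline{\Gamma}\le d(\Gamma)\le\rk\Gamma$ (legitimate, since the paper's Corollary to Proposition~\ref{P:d is dimension} gives the first inequality) replaces the paper's separate case analysis for $d$ and its $\GL_n(\Q)$-reduction in (viii); your linear-algebra argument that $\Q$-independent vectors in $\Q^n$ stay $\Q_p$-independent is a perfectly good substitute. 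These variants are all sound and arguably cleaner in places.

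There is, however, one genuine flaw in your justifications: the ``standard fact'' that \emph{a subgroup that maps onto a compact group with finite kernel is itself compact} is false for abstract topological groups, and it is the only support you offer for the reverse membership implication in (iii) (on which (iv) then relies). Counterexample: $\Z_p$ with the discrete topology maps onto $\Z_p$ by the identity, continuously, surjectively, with trivial kernel, yet is noncompact; more to the point, closed subgroups of $p$-adic Lie groups can be infinite discrete (e.g.\ $p^{\Z}\le\Q_p^\times$), so from ``closed subgroup of $G(\Q_p)$ injecting continuously, modulo a finite kernel, into a compact group'' you cannot conclude compactness by pure topological-group reasoning. What is actually needed, and what the paper invokes, is that the map of topological spaces $G(\Q_p)\to G''(\Q_p)$ induced by an algebraic homomorphism with finite kernel is \emph{proper}, so that the preimage of a compact subgroup is a compact subgroup containing the given point; this properness is a fact about algebraic groups over local fields (e.g.\ via factoring through the finite map $G\to G/\ker$ followed by a closed immersion), not a formal consequence of finite kernel. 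The fix is one line, but as written your argument for this step would not survive scrutiny.
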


\begin{proof}
\begin{romanenum}
\item
For $\del(\Gamma):=\dim \overline{\Gamma}$ the result is obvious.
For $d(\Gamma)$ it follows since
$\rk \Gamma  - \rk(\Gamma \intersect H)$
equals the rank of the image of $\Gamma$ in $G/H$.
\item
We have $\Gamma'' \le G''(\Q)_f$ by functoriality.
For $\dim \overline{\Gamma}$, the inequality follows 
since $\overline{\log \Gamma}$ surjects onto $\overline{\log \Gamma''}$.
For $d(\Gamma)$, if $H \le G$ and $H''$ is its image in $G''$,
then the subgroup 
$\Gamma/(\Gamma \intersect H)$ of $G/H$ surjects onto
the subgroup $\Gamma''/(\Gamma'' \intersect H'')$ of $G''/H''$,
and this implies the second inequality in
\begin{align*}
        d(\Gamma'') 
        &\le \dim H'' + \rk \Gamma'' - \rk(\Gamma'' \intersect H'') \\
        &\le \dim H'' + \rk \Gamma - \rk(\Gamma \intersect H) \\
        &\le \dim H + \rk \Gamma - \rk(\Gamma \intersect H).
\end{align*}
This holds for all $H \le G$, so $d(\Gamma'') \le d(\Gamma)$.
\item
The map of topological spaces $G(\Q_p) \to G''(\Q_p)$ is proper,
so the inverse image of $G''(\Q_p)_f$ is contained in $G(\Q_p)_f$;
this gives the first statement.
To prove $\del(\Gamma)=\del(\Gamma'')$,
first use~(1) to assume that $G \to G''$ is surjective,
so $G''=G/H$ for some finite $H \le G$.
By~(ii), $\del(\Gamma'') \le \del(\Gamma)$.
By~(3), $\del(\Gamma) \le \dim H + \del(\Gamma'') = \del(\Gamma'')$.
Thus $\del(\Gamma)=\del(\Gamma'')$.
\item
We may reduce to the case in which $\Gamma_1$ is a finite-index subgroup
of $\Gamma_2$.
Let $n=(\Gamma_2:\Gamma_1)$, so $n \Gamma_1 \le \Gamma_2 \le \Gamma_1$.
If $\Gamma_1 \le G(\Q)_f$, then $\Gamma_2 \le G(\Q)_f$.
Conversely, if $\Gamma_2 \le G(\Q)_f$, then $n\Gamma_1 \le G(\Q)_f$,
so $\Gamma_1 \le G(\Q)_f$ by~(iii) applied to $G \stackrel{n}\to G$.
In this case, (iii) gives $\del(n \Gamma_1) = \del(\Gamma_1)$,
and (ii) implies that both equal $\del(\Gamma_2)$.
\item
Let $\Gamma:=\Gamma_1 \times \Gamma_2$.
Since a product of compact open subgroups is a compact open subgroup,
we have $G_1(\Q_p)_f \times G_2(\Q_p)_f \le (G_1 \times G_2)(\Q_p)_f$.
(In fact, equality holds.)
Thus $\Gamma \le (G_1 \times G_2)(\Q)_f$.
The equality $\dim \overline{\Gamma} = 
\dim \overline{\Gamma_1} \times \dim \overline{\Gamma_2}$
follows from the definitions.
To prove the corresponding equality for $d$, we must show that
the infimum in the definition of $d(\Gamma)$
is realized for an $H$ of the form $H_1 \times H_2$
with $H_i \le G_i$.
Suppose instead that $K \le G_1 \times G_2$ realizes the infimum.
Let $\pi_1\colon G_1 \times G_2 \to G_1$ be the first projection.
Let $H_1=\pi_1(K)$.
Let $H_2=\ker(\pi_1|_K)$; view $H_2$ as a subgroup scheme of $G_2$.
Let $H=H_1 \times H_2$.
Thus $\dim K = \dim H$.
The exact sequence 
\[
        0 \to \Gamma_2 \intersect H_2 
        \to \Gamma \intersect K 
        \stackrel{\pi_1}\to \Gamma_1 \intersect H_1
\]
shows that $\rk(\Gamma \intersect K) \le \rk(\Gamma \intersect H)$,
so
\[
        \dim H + \rk \Gamma - \rk(\Gamma \intersect H)
        \le  \dim K + \rk \Gamma - \rk(\Gamma \intersect K).
\]
Thus $H$ too realizes the infimum in the definition of $d(\Gamma)$,
as desired.
\item
Apply~(ii) to the addition homomorphism $G \times G \to G$
and $\Gamma := \Gamma_1 \times \Gamma_2$, and use~(v).
\item
We have $\del(\Gamma) \le 1$ by~(2).
If $\dim \overline{\Gamma}=0$, then the finitely generated
torsion-free $\Z_p$-module $\overline{\log \Gamma}$ is of rank $0$, 
so it is $0$; therefore $\Gamma \subseteq \ker \log$, so $\Gamma$ is torsion,
contradicting the hypothesis $\rk \Gamma=1$.
If $d(\Gamma)=0$, then there exists $H \le G$ with $\dim H=0$ and $\rk \Gamma = \rk(\Gamma \intersect H)$;
then $H$ is finite, so $\rk(\Gamma \intersect H)=0$ and $\rk(\Gamma)=0$,
contradicting the hypothesis.
\item
By applying an element of $\GL_n(\Q) = \Aut \G_a^n$,
we may assume that $\Gamma = \Z^r \times \{0\}^{n-r} \le \Q^n = \G_a^n(\Q)$,
where $r:=\rk \Gamma$.
Using~(v), we reduce to the case $n=1$.
If $r=0$, then the result is trivial.
If $r=1$, use~(vii).
\end{romanenum}
\end{proof}

%****************************************************************************
\section{Tori}
\label{S:tori}

\begin{lemma}
\label{L:subquotient}
Let $K$ be a Galois extension of $\Q$.
Let $\calG:=\Gal(K/\Q)$.
Then the representation $\OO_K^\times \tensor \C$ of $\calG$
is a subquotient of the regular representation.
\end{lemma}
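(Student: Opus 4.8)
The plan is to compute the $\calG$-representation $\OO_K^\times \tensor \C$ explicitly by means of Dirichlet's unit theorem, and then to recognize it as a subquotient of the permutation representation of $\calG$ on the archimedean places. First I would let $S$ be the set of archimedean places of $K$ and consider the logarithmic embedding $\lambda \colon \OO_K^\times \to \R^S$ sending $u$ to $(\log \|u\|_v)_{v \in S}$, where $\|\cdot\|_v$ is the normalized absolute value. By Dirichlet's theorem its kernel is the finite group of roots of unity, and by the product formula its image is a full-rank lattice in the trace-zero subspace $\R^S_0 = \{(x_v)_{v \in S} : \sum_{v} x_v = 0\}$. The group $\calG$ acts on $S$, and a direct check gives $\|\sigma(u)\|_{\sigma v} = \|u\|_v$, so $\lambda$ is $\calG$-equivariant for the permutation action on $\R^S$. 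Tensoring the resulting isomorphism $\OO_K^\times \tensor \R \isom \R^S_0$ of $\R[\calG]$-modules with $\C$ gives a $\C[\calG]$-module isomorphism $\OO_K^\times \tensor \C \isom \C^S_0$, where $\C^S_0 \le \C^S$ is the trace-zero subspace.

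Next I would identify the ambient module $\C^S$. Since $K/\Q$ is Galois and $\Q$ has a single archimedean place, $\calG$ acts transitively on $S$; fixing $v_0 \in S$ with stabilizer (decomposition group) $D \le \calG$, we obtain $S \isom \calG/D$ as $\calG$-sets, and hence a $\C[\calG]$-module isomorphism $\C^S \isom \C[\calG/D]$. The canonical map $\C[\calG] \surjects \C[\calG/D]$ sending each group element to its coset is $\calG$-equivariant and surjective, so $\C^S$ is a quotient of the regular representation $\C[\calG]$.

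Combining the two steps, $\OO_K^\times \tensor \C \isom \C^S_0$ is a subrepresentation of $\C^S$, which is itself a quotient of $\C[\calG]$; hence $\OO_K^\times \tensor \C$ is a subquotient of the regular representation, as desired. The one step requiring care is the $\calG$-equivariance of $\lambda$ together with the correct (uniform) normalization of $\|\cdot\|_v$: this works because all places in $S$ lie in a single $\calG$-orbit and therefore share the same local degree, which is what makes $\R^S$ genuinely the permutation module. Once equivariance is in hand, the remainder is formal representation theory.
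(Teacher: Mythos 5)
Your proof is correct and takes essentially the same approach as the paper: both use the Dirichlet logarithm to identify $\OO_K^\times \tensor \C$ with the trace-zero subspace of the permutation representation $\C^P$ on archimedean places, and then exhibit $\C^P$ as a quotient of the regular representation. The only cosmetic difference is that the paper realizes the quotient map via the surjection of $\calG$-sets $E \to P$ from embeddings to places (with $\C^E$ the regular representation), whereas you fix a place, invoke transitivity, and use $\C[\calG] \surjects \C[\calG/D]$ --- these are the same map under the identification $E \isom \calG$.
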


\begin{proof}
Define the $\calG$-set 
$E:=\Hom_{\text{$\Q$-algebras}}(K,\C)$
of embeddings 
and the $\calG$-set $P$ of archimedean places of $K$,
the difference being that conjugate complex embeddings are identified in $P$.
Then $E$ is a principal homogeneous space of $\calG$,
and there is a natural surjection $E \to P$.
Therefore $\C^E$ is the regular representation
and the permutation representation $\C^P$ is a quotient of $\C^E$.
The proof of the Dirichlet unit theorem gives a
$\calG$-equivariant exact sequence
\[
        0 \to \OO_K^\times \tensor \R \stackrel{\log}\to \R^P \to \R \to 0
\]
so $\OO_K^\times \tensor \C$ is a subrepresentation of $\C^P$.
\end{proof}

\begin{proposition}
\label{P:torus rank}
Let $\calT$ be a group scheme of finite type over $\Z$
whose generic fiber $T:=\calT \times \Q$ is a torus.
Then
\begin{enumerate}
\item[(a)]
$\calT(\Z)$ is a finitely generated abelian group.
\item[(b)]
$\rk \calT(\Z) \le \dim T$.
\item[(c)]
If $\Gamma \le \calT(\Z)$, then $d(T,\Gamma) = \rk \Gamma$.
\end{enumerate}
\end{proposition}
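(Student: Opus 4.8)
The plan is to reduce (a) and (b) to a single rank computation that feeds on Lemma~\ref{L:subquotient}, and then to obtain (c) formally from (b). Fix a finite Galois extension $K/\Q$ that splits $T$, set $\calG:=\Gal(K/\Q)$, and let $X:=X^*(T)$ and $X_*:=X_*(T)$ be the character and cocharacter lattices; each is a $\calG$-module, free of rank $n:=\dim T$, and $X$ is the $\Z$-dual of $X_*$. Because (a)--(c) concern only ranks (and finite generation, which is preserved under commensurability), I would first note that the conclusions are independent of the chosen model: any two group schemes over $\Z$ (or over $\OO_K$) with the same torus as generic fiber have local points that are commensurable compact open subgroups at the finitely many primes where the models differ, so their groups of global integral points are commensurable. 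This lets me replace $\calT$, after base change to $\OO_K$, by the split model $\G_m^n$.

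Over $\OO_K$ the torus splits, and applying cocharacters to units gives a $\calG$-equivariant isomorphism $\calT(\OO_K)\isom X_*\tensor_\Z\OO_K^\times$, with $\calG$ acting diagonally. Part (a) is then immediate: $\calT(\Z)$ embeds into $\calT(\OO_K)$, which is commensurable with $(\OO_K^\times)^n$ and hence finitely generated by Dirichlet's unit theorem, and subgroups of finitely generated abelian groups are finitely generated. For (b) I would pass to $\C$-coefficients. Since $\calG$ is finite and we are in characteristic $0$, the functor of $\calG$-invariants is exact, and $\calT(\Z)$ is commensurable with $\calT(\OO_K)^{\calG}$; writing $V:=\OO_K^\times\tensor\C$ we get
\[
\calT(\Z)\tensor\C \isom \bigl(\calT(\OO_K)\tensor\C\bigr)^{\calG} \isom \bigl((X_*\tensor\C)\tensor_\C V\bigr)^{\calG} \isom \Hom_\calG(X\tensor\C,\,V),
\]
the last isomorphism because $(X_*\tensor\C)^\vee\isom X\tensor\C$. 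By Lemma~\ref{L:subquotient}, $V$ is a subquotient of the regular representation $\C[\calG]$, which is semisimple; hence every irreducible occurs in $V$ with multiplicity at most its multiplicity in $\C[\calG]$, and $\Hom_\calG(U,\C[\calG])\isom U^\vee$ for every representation $U$ by Frobenius reciprocity, so
\[
\rk\calT(\Z) = \dim_\C\Hom_\calG(X\tensor\C,\,V) \le \dim_\C\Hom_\calG(X\tensor\C,\,\C[\calG]) = \dim_\C(X\tensor\C) = n,
\]
which is (b).

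Finally I would derive (c) from (b). The term $H=\{0\}$ in the infimum defining $d(T,\Gamma)$ gives $d(T,\Gamma)\le\rk\Gamma$, so it remains to prove $\dim H\ge\rk(\Gamma\intersect H)$ for every subgroup scheme $H\le T$. Let $\calH$ be the scheme-theoretic closure of $H$ in $\calT$; then $\Gamma\intersect H\le\calH(\Z)$. Over $\Q$ the group $H$ is of multiplicative type, so its identity component $H^0$ is a subtorus with $\dim H^0=\dim H$ and $H/H^0$ finite; consequently $\rk\calH(\Z)\le\rk\calH^0(\Z)$, where $\calH^0$ is the closure of $H^0$. Applying (b) to the torus $H^0$ equipped with this integral model yields $\rk\calH^0(\Z)\le\dim H^0=\dim H$, whence $\rk(\Gamma\intersect H)\le\dim H$ and therefore $d(T,\Gamma)=\rk\Gamma$.

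I expect the representation-theoretic heart of the argument to be short once it is set up; the \emph{main obstacle} will be the bookkeeping that passes from the given model $\calT$ over $\Z$ to the split model over $\OO_K$ --- in particular verifying that $\calT(\Z)$, $\calT(\OO_K)^{\calG}$, and $X_*\tensor\OO_K^\times$ are commensurable as $\calG$-modules and that the comparison isomorphism is genuinely Galois-equivariant, so that taking $\C$-coefficients and $\calG$-invariants is legitimate.
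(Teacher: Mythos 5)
Your argument is correct in substance, and its skeleton matches the paper's: Lemma~\ref{L:subquotient} carries the representation-theoretic weight, and your part (c) --- pass to the identity component, take the Zariski closure $\calH$ of $H$ in $\calT$, and apply (b) to the resulting model of the subtorus --- is exactly the paper's deduction. The genuine divergence is in (b). The paper cites Eisentr\"ager's thesis for the rank formula $\rk \calT(\Z) = (\chi_X,\chi_K)$ and then compares inner products of characters, whereas you re-derive that formula from scratch via $\calT(\Z)\tensor\C \isom (\calT(\OO_K)\tensor\C)^{\calG} \isom \Hom_\calG(X\tensor\C,\,V)$ and Frobenius reciprocity. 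What your route buys is self-containedness; what it costs is precisely the step you flag as the main obstacle, and that step is more than bookkeeping: for an \emph{arbitrary} finite-type model $\calT$, the claim that $\calT(\OO_v)$ is a compact \emph{open} (hence finite-index) subgroup of the maximal compact subgroup of $T(K_v)$ at the bad places requires passing to the flat closure of the generic fiber and a group-smoothening argument --- this is essentially the content of the results the paper cites rather than proves.

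The good news is that your proof needs much less than the two-sided commensurability you assert. For (a) and (b) only the inclusion $\calT(\OO_K) \subseteq X_*\tensor\OO_K^\times$ is used, and that follows from compactness alone: $\calT(\OO_v)$ is a compact subgroup of $T(K_v)$ for any separated finite-type model (an $\OO_v$-point factors through the flat closure of the generic fiber, and through one of finitely many affine charts), and a compact subgroup of $X_*\tensor K_v^\times$ lies in $X_*\tensor\OO_v^\times$. Replacing your chain of isomorphisms by injections, the inequality $\rk\calT(\Z) \le \dim_\C \Hom_\calG(X\tensor\C,\,\C[\calG]) = \dim T$ survives intact, and the ``main obstacle'' evaporates; note also that the paper's proof of (a), via $\calT(\Z)\le\calT(\OO_{K,S}) \isom (\OO_{K,S}^\times)^n$ and the Dirichlet $S$-unit theorem, avoids local considerations entirely. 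One small omission in your (c): before invoking $d(T,\Gamma)$ you should verify that it is defined, i.e.\ that $\Gamma \le T(\Q)_f$; the paper does this by observing that $\Gamma \le \calT(\Z) \le \calT(\Z_p)$, a compact subgroup of $T(\Q_p)$.
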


\begin{proof}\hfill
\begin{enumerate}
\item[(a)]
For some number field $K$ and set $S$ of places of $K$,
we have $\calT \times \OO_{K,S} \isom (\G_m)_{\OO_{K,S}}^n$,
so $\calT(\OO_{K,S})$ is finitely generated by the Dirichlet $S$-unit theorem.
Therefore the subgroup $\calT(\Z)$ is finitely generated.
\item[(b)]
We may assume that $K$ is Galois over $\Q$.
Let $\calG = \Gal(K/\Q)$.
Let $X$ be the character group $\Hom(T_K,(\G_m)_K)$ of $T$.
Let $\chi_X$ be the character of the representation $X \tensor \C$ of $\calG$.
Let $\chi_K$ be the character of the representation $\OO_K^\times \tensor \C$
of $\calG$.
By Theorem~6.7 and Corollary~6.9 of \cite{Eisentraeger-thesis},
$\rk \calT(\Z) = (\chi_X,\chi_K)$.
On the other hand, $\dim T = \rk X = (\chi_X,\chi_{\reg})$,
where $\chi_{\reg}$ is the character of the regular representation of $\calG$.
The result now follows from Lemma~\ref{L:subquotient}.
\item[(c)]
First, $\calT(\Z)$ is contained in the compact open subgroup $\calT(\Z_p)$
of $T(\Q_p)$, so $d(T,\Gamma)$ is defined.
By~(2), $d(T,\Gamma) \le \rk \Gamma$.
To prove the opposite inequality,
we must show that for every subgroup scheme $H \le T$,
we have $\rk(\Gamma \intersect H) \le \dim H$.
By replacing $H$ by its connected component of the identity,
we may assume that $H$ is a subtorus of $T$.
Let $\calH$ be the Zariski closure of $H$ in $\calT$.
Then $\Gamma \intersect H \le \calH(\Z)$,
so $\rk(\Gamma \intersect H) \le \rk \calH(\Z) \le \dim H$
by~(b).
\end{enumerate}
\end{proof}

\begin{corollary}
\label{C:Leopoldt}
Let $K$ be a number field.
Let $T$ be the restriction of scalars $\Res_{K/\Q} \G_m$.
Let $\Gamma \le T(\Q) \isom K^\times$ correspond to $\OO_K^\times$.
Then Leopoldt's conjecture is equivalent
to a positive answer to Question~\ref{Q:over Q} for $\Gamma$.
\end{corollary}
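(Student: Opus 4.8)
The plan is to compute both $d(\Gamma)$ and $\dim\overline{\Gamma}$ explicitly for this particular $\Gamma$, and then to recognize that their equality is precisely Leopoldt's conjecture for $K$ at $p$.

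First I would fix an integral model realizing $\Gamma$ as $\calT(\Z)$. Take $\calT := \Res_{\OO_K/\Z}\G_m$; its generic fiber is $T$, and $\calT(\Z) = \OO_K^\times$, so under the identification $T(\Q)\isom K^\times$ the subgroup $\Gamma$ is exactly $\calT(\Z)$. Proposition~\ref{P:torus rank}(c) then gives at once
\[
    d(T,\Gamma) = \rk\Gamma = \rk\OO_K^\times = r_1+r_2-1,
\]
where $r_1,r_2$ are the numbers of real and complex places of $K$ and the last equality is Dirichlet's unit theorem. In particular $\Gamma\le T(\Q)_f$, so $\dim\overline{\Gamma}$ is defined.

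Next I would make the $p$-adic side explicit. Since $T(\Q_p) = (K\tensor_\Q\Q_p)^\times = \prod_{\pp\mid p}K_\pp^\times$, we have $T(\Q_p)_f = \prod_{\pp\mid p}\OO_{K_\pp}^\times$ and $\Lie T = \prod_{\pp\mid p}K_\pp$; by functoriality of the canonical logarithm under $\Res_{K/\Q}$, the map $\log$ is the product of the local $p$-adic logarithms $\log_\pp\colon\OO_{K_\pp}^\times\to K_\pp$. Thus $\dim\overline{\Gamma} = \rk_{\Z_p}\overline{\log\Gamma}$ is the $\Z_p$-rank of the $\Z_p$-span of the vectors $(\log_\pp u)_{\pp\mid p}$ for $u\in\OO_K^\times$; equivalently, it is the $\Z_p$-rank of the closure of the diagonal image of $\OO_K^\times$ in $\prod_{\pp\mid p}\OO_{K_\pp}^\times$. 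Writing $\delta_{K,p}\ge 0$ for the Leopoldt defect, this reads $\dim\overline{\Gamma} = \rk\OO_K^\times - \delta_{K,p}$.

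Assembling the two computations, a positive answer to Question~\ref{Q:over Q} for $\Gamma$---that is, $\dim\overline{\Gamma} = d(\Gamma)$---becomes the equality $\dim\overline{\Gamma} = \rk\OO_K^\times$, i.e.\ $\delta_{K,p} = 0$, which is Leopoldt's conjecture. The one step requiring genuine care, rather than formal manipulation, is the identification in the previous paragraph: one must verify that the paper's $\dim\overline{\Gamma}$---defined through the Bourbaki logarithm on $T(\Q_p)_f$ and the $\Z_p$-span---coincides exactly with the $\Z_p$-rank in the classical definition of the Leopoldt defect, with the rank bookkeeping matching and no spurious contribution from roots of unity (killed by $\log$) or from the archimedean product formula. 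Once this dictionary is in place, the corollary follows immediately.
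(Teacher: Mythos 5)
Your proposal is correct and takes essentially the same route as the paper: the decisive step in both is applying Proposition~\ref{P:torus rank}(c) to $\calT=\Res_{\OO_K/\Z}\G_m$ (noting $\calT(\Z)=\OO_K^\times=\Gamma$) to conclude $d(T,\Gamma)=\rk\Gamma$, so that Question~\ref{Q:over Q} for $\Gamma$ becomes $\dim\overline{\Gamma}=\rk\Gamma$. The paper simply takes this last equality as the standard formulation of Leopoldt's conjecture, whereas you additionally (and correctly) unwind the dictionary---$T(\Q_p)_f=\prod_{\pp\mid p}\OO_{K_\pp}^\times$, the Bourbaki logarithm factoring as the product of local $p$-adic logarithms, and $\dim\overline{\Gamma}=\rk\OO_K^\times-\delta_{K,p}$---which the paper leaves implicit.
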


\begin{proof}
Leopoldt's conjecture is the statement $\dim \overline{\Gamma} = \rk \Gamma$.
Let $\calT = \Res_{\OO_K/\Z} \G_m$.
By Proposition~\ref{P:torus rank}(c) applied to $\calT$, 
$d(T,\Gamma)=\rk \Gamma$.
So Leopoldt's conjecture is equivalent to 
$\dim \overline{\Gamma} = d(T,\Gamma)$.
\end{proof}

\begin{remark}
In effect, we have shown that Leopoldt's conjecture cannot be disproved simply
by finding a subtorus $H$ of $\Res_{K/\Q} \G_m$ containing a subgroup
of integer points of rank greater than $\dim H$.
This seems to have been known to experts, 
but we could not find a published proof.
\end{remark}

%****************************************************************************
\section{Further questions}
\label{S:further questions}

\begin{question}
\label{Q:computable}
Is $d(\Gamma)$ computable in terms of $G$ and generators for $\Gamma$?
\end{question}

\begin{question}
If the answer to Question~\ref{Q:computable} is positive,
can $\dim \overline{\Gamma} = d(\Gamma)$ be verified in
each instance where it is true?
\end{question}

\begin{question}
\label{Q:number field}
Can one define a plausible generalization of $d(\Gamma)$
for the analogous situation
where $\Q$ and $\Q_p$ are replaced a number field $k$ and some
nonarchimedean completion $k_v$?
\end{question}
%The idea should be that $\dim \overline{\Gamma}$ should be ``as large as possible given the restrictions coming from algebraic subgroups and endomorphisms''.

\begin{remark}
Applying restriction of scalars from $k$ to $\Q$ and then applying $d$
does not answer Question~\ref{Q:number field}:
it would instead predict the dimension of the closure
of $\Gamma$ in the product $\prod_{v|p} G(k_v)$
instead of in a single $G(k_v)$.
\end{remark}

\begin{remark}
If $G$ be a commutative group scheme of finite type over $\Q$,
we can consider also $G(\R)$, and define $G(\R)_f$ and $G(\Q)_f$.
The closure $\overline{\Gamma}$ 
of any subgroup $\Gamma \le G(\Q)_f$ in $G(\R)$ is a real Lie group.
The natural guess for $\dim \overline{\Gamma}$
seems now to be that it equals the dimension of the Zariski closure $H$
of $\Gamma$ in $G$; in other words, $\overline{\Gamma}$ 
should be open in $H(\R)$.
See \cite{Mazur1992}*{\S7} for a discussion of the abelian variety case.
\end{remark}

%****************************************************************************
\section*{Acknowledgements} 

I thank Robert Coleman for suggesting 
the reference \cite{BourbakiLie1-3}*{III.\S7.6}.

\begin{bibdiv}
\begin{biblist}

% \bibselect{big}

\bib{BourbakiLie1-3}{book}{
  author={Bourbaki, Nicolas},
  title={Lie groups and Lie algebras. Chapters 1--3},
  series={Elements of Mathematics (Berlin)},
  note={Translated from the French; Reprint of the 1989 English translation},
  publisher={Springer-Verlag},
  place={Berlin},
  date={1998},
  pages={xviii+450},
  isbn={3-540-64242-0},
  review={MR1728312 (2001g:17006)},
}

\bib{Chabauty1941}{article}{
  author={Chabauty, Claude},
  title={Sur les points rationnels des courbes alg\'ebriques de genre sup\'erieur \`a l'unit\'e},
  language={French},
  journal={C. R. Acad. Sci. Paris},
  volume={212},
  date={1941},
  pages={882\ndash 885},
  review={MR0004484 (3,14d)},
}

\bib{Coleman1985chabauty}{article}{
  author={Coleman, Robert F.},
  title={Effective Chabauty},
  journal={Duke Math. J.},
  volume={52},
  date={1985},
  number={3},
  pages={765\ndash 770},
  issn={0012-7094},
  review={MR808103 (87f:11043)},
}

\bib{Eisentraeger-thesis}{book}{
  author={Eisentr\"ager, Anne Kirsten},
  title={Hilbert's tenth problem and arithmetic geometry},
  date={2003-05},
  note={Ph.D.\ thesis, University of California at Berkeley},
  pages={iii+79},
}

\bib{Mazur1992}{article}{
  author={Mazur, Barry},
  title={The topology of rational points},
  journal={Experiment. Math.},
  volume={1},
  date={1992},
  number={1},
  pages={35\ndash 45},
  issn={1058-6458},
  review={MR1181085 (93j:14020)},
}

\end{biblist}
\end{bibdiv}

\end{document}